\documentclass{article}
\usepackage{amsmath}
\usepackage{amsfonts}
\usepackage{amssymb}
\usepackage{amsthm}
\usepackage[shortlabels]{enumitem}

\title{Comment to ``Almost disjoint sets, the dense set problem and the partition calculus''}
\author{Júnio Luan Pereira \\ Independent Researcher}
\date{}

\newcommand{\stick}{%
\ensuremath{%
{%
\mathchoice%
{{\text{\raisebox{0.7ex}{$\bullet$}\kern-0.39em$|$}}}%
{{\text{\raisebox{0.7ex}{$\bullet$}\kern-0.39em$|$}}}%
{{\text{\raisebox{0.65ex}{$\bullet$}\kern-0.40em$|$}}}%
{{\text{\raisebox{0.6ex}{$\bullet$}\kern-0.42em$|$}}}%
}%
}%
}

\newtheorem{definition}{Definition}
\newtheorem{proposition}[definition]{Proposition}

\begin{document}
\maketitle

\begin{abstract}
  This text highlights issues present in the proof of Lemma 6.10 of the Baumgartner ($\ast$1943, $\dag$2011)
  article ``Almost disjoint sets, the dense set problem and the partition calculus'' of 1976, and intends to
  present a correction at the same time it proves a stronger result mentioned in the article to have similar proof.
\end{abstract}

\section{Introduction}

Lemma 6.10 from Baumgartner's article \cite[p.\@ 428]{baumgartner_almost-disjoint_1976} belongs to the proof of
Theorem 6.7, a consistency result valid for any regular cardinal $\kappa$. The case $\kappa = \aleph_0$ of Theorem
6.7 is the literature reference for the consistency with $\neg \text{CH}$ of the statement now denoted as $\stick =
\aleph_1$ and described as \textbf{stick principle}, although Baumgartner's proof itself focuses on uncountable
cardinals and just mentions that case $\kappa = \aleph_0$ can be treated as if it is an inaccessible cardinal. At the
end of \S6, the article also proves a stronger statement by means of \cite[(25), p.\@
433]{baumgartner_almost-disjoint_1976}, which proof is mentioned to be done by complicating the proof of Lemma 6.10.

The proof presented in \cite{baumgartner_almost-disjoint_1976} for Lemma 6.10, however, has issues that are not
merely mistypes, and there is also some difficulties that turn nontrivial to apply it for $\kappa = \aleph_0$ in the
way the proof suggests. The purpose of this comment is to describe and address such issues while providing a complete
proof for \cite[(25), p.\@ 433]{baumgartner_almost-disjoint_1976}.

The case $\beta = \kappa$ of \cite[(25), p.\@ 433]{baumgartner_almost-disjoint_1976} has a proof that is identical to
Lemma 6.10, then we will prove it first, mention the changes required to apply it for Lemma 6.10, and use it to list
the issues in the original proof. The Lemma 6.10 will be required in order to complete the proof of \cite[(25), p.\@
433]{baumgartner_almost-disjoint_1976}, that will follow the same lines as the case $\beta = \kappa$.

The Section \ref{secprelim} will introduce all the things from \cite{baumgartner_almost-disjoint_1976} required in
order to understand the proof of \cite[(25), p.\@ 433]{baumgartner_almost-disjoint_1976}, that will be presented in
Section \ref{secmain} together with the list of issues from the original proof.

We will assume all the conventions of notation and definition from \S1 of Baumgartner's article, described in
\cite[p.\@ 401-406]{baumgartner_almost-disjoint_1976}. Although we will make considerable changes in the structure of
the arguments from the proof in order to turn its main ideas more visible, we will try, at the extent of the
possible, to preserve all the terminology used in \cite{baumgartner_almost-disjoint_1976} here. We will also give
emphasis to explicitly referencing all statements and definitions from the original article that will be replicated
here.

\section{Preliminaries}\label{secprelim}

The Lemma 6.10 is a forcing technique consistence result over countable transitive models. The partial order used in
the forcing technique is the following one.
\begin{definition}[{\cite[p.\@ 428]{baumgartner_almost-disjoint_1976}}]
  Let $\kappa$ and $\lambda$ be cardinals, with $\kappa$ being regular. $R(\kappa, \lambda)$ is the partially ordered
  set of all subsets $B$ of $\lambda \times 2 \times \kappa$ satisfying:
  \begin{enumerate}[(1), series=Baumgartner]
    \item $|B| \leq \kappa$ \cite[(13), p.\@ 428]{baumgartner_almost-disjoint_1976},

    \item if $\alpha < \kappa$ and $\beta < \kappa$ then either $(\alpha, 0, \beta) \not\in B$ or $(\alpha, 1,
        \beta) \not\in B$ \cite[(14), p.\@ 428]{baumgartner_almost-disjoint_1976},

    \item \label{baumrklreq3} for all $\alpha < \kappa$, $\{\beta < \kappa: (\alpha, 0, \beta) \not\in B, (\alpha,
        1, \beta) \not\in B\}$ is closed and unbounded in $\kappa$ \cite[(15), p.\@
        428]{baumgartner_almost-disjoint_1976};
  \end{enumerate}
  such that $B_1 \leq B_2$ iff $B_2 \subseteq B_1$ for all $B_1, B_2 \in R(\kappa, \lambda)$.
\end{definition}
If $\kappa = \aleph_0$, then ``closed and unbounded'' means merely ``infinite''.

Each $R(\kappa, \lambda)$ has the $(2^\kappa)^+$-chain condition \cite[Lemma 6.9, p.\@
429]{baumgartner_almost-disjoint_1976} and, for every uncountable $\kappa$, $R(\kappa, \lambda)$ is $\kappa$-closed
\cite[Lemma 6.8, p.\@ 429]{baumgartner_almost-disjoint_1976} due to the validity of:
\begin{enumerate}[resume*=Baumgartner]
  \item \label{baumkclosed} The intersection of fewer than $\kappa$ closed unbounded sets of $\kappa$ is itself
      closed unbounded, hence non-empty \cite[p.\@ 430]{baumgartner_almost-disjoint_1976}.
\end{enumerate}
For $\kappa = \aleph_0$, the statement above is false, but it can be trivially proved that $R(\aleph_0, \lambda)$ is
$\aleph_0$-closed too.

If $\mathfrak{M}$ is a countable transitive model of ZFC and $G$ is ${R(\kappa, \lambda)}^{\mathfrak{M}}$-generic
over $\mathfrak{M}$, then $G$ provides directly a new function $H: \lambda \rightarrow {{^2}\kappa}$ for the
countable transitive model $\mathfrak{M}[G]$. Additionally, $\mathfrak{M}[G]$ have the new sets:
\begin{enumerate}[resume*=Baumgartner]
  \item $G_\alpha = \{\beta < \kappa: (\exists B \in G) (\alpha, 0, \beta) \in B\} \not\in \mathfrak{M}$ for every
      $\alpha < \lambda$, with $G_\alpha \neq G_{\alpha'}$ whenever $\alpha \neq \alpha'$ \cite[p.\@
      428]{baumgartner_almost-disjoint_1976}.
\end{enumerate}
Thus, if $\mathfrak{M}[G]$ preserve cardinalities, then $2^\kappa \geq \lambda$ in $\mathfrak{M}[G]$.

For every $B \in R(\kappa, \lambda)$, we define the domain of $B$ to be
$$\mathrm{domain}\ B = \{\alpha < \lambda: (\exists i < 2)(\exists \beta < \kappa)(\alpha, i, \beta) \in B\}$$
\cite[p.\@ 429]{baumgartner_almost-disjoint_1976}.

\section{Corrected version of Baumgartner's proof}\label{secmain}

The proof of Theorem 6.7 ends with the proof of \cite[Lemma 6.11, p.\@ 432]{baumgartner_almost-disjoint_1976}. The
strengthening of Lemma 6.11 demonstrated in \cite[(c), p.\@ 432]{baumgartner_almost-disjoint_1976} uses the following
proposition.
\begin{proposition}
  Let $\mathfrak{M}$ be a countable transitive model of ZFC + GCH, and let $\kappa$ and $\lambda$ be cardinals in
  $\mathfrak{M}$ such that $\kappa$ is regular and $\kappa \leq \lambda$. If $\kappa$ is not inaccessible in
  $\mathfrak{M}$, then assume also that $\diamond_\kappa$ holds in $\mathfrak{M}$. Let $Z$ be a set of ordinals
  well-ordered by ${\prec}$ in $\mathfrak{M}$ and $\beta < \kappa^+$. If $G$ is ${R(\kappa,
  \lambda)}^{\mathfrak{M}}$-generic over $\mathfrak{M}$, then
  \begin{enumerate}[resume*=Baumgartner]
    \item If $Y \in \mathfrak{M}[G]$ is a subset of $Z$ which has order-type $\kappa^+$ with respect to $\prec$,
        then there is $X\subseteq Y$ such that $X \in \mathfrak{M}$ and $X$ has order-type $\beta$ with respect to
        $\prec$. \cite[(25), p.\@ 433]{baumgartner_almost-disjoint_1976}.
  \end{enumerate}
\end{proposition}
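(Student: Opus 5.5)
Fix a name $\dot Y$ and a condition $B_0$ forcing that $\dot Y \subseteq \check Z$ has $\prec$-order-type $\kappa^+$. The plan is to find, densely below $B_0$, a condition $B^\ast$ and a set $X \in \mathfrak{M}$ of $\prec$-order-type $\beta$ with $B^\ast \Vdash \check X \subseteq \dot Y$. Genericity then gives the proposition.

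I will argue by induction on $\beta < \kappa^+$, first handling the case $\beta = \kappa$. This base case is the exact analogue of Lemma 6.10, whose argument I transfer. Several facts are used throughout:
\begin{itemize}
  \item $R(\kappa,\lambda)$ is $\kappa$-closed, by \ref{baumkclosed} for uncountable $\kappa$ and trivially for $\kappa=\aleph_0$.
  \item It has the $(2^\kappa)^+ = \kappa^{++}$-chain condition, by GCH.
  \item Consequently $\kappa^+$ is preserved and no new sequences of length $<\kappa$ are added.
\end{itemize}

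\textbf{Base case $\beta = \kappa$.} Since $\dot Y$ has size $\kappa^+$ and every condition has domain of size $\le\kappa$, I build in $\mathfrak{M}$ a decreasing sequence $\langle B_\xi : \xi<\kappa\rangle$ together with elements $z_\xi \in Z$ such that $B_\xi \Vdash \check z_\xi \in \dot Y$. The limit stages are the problem: a plain union fails to stay a condition at stage $\kappa$, because requirement \ref{baumrklreq3} can break when $\kappa$ many clubs are intersected.

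The remedy is a fusion argument. I fix a $\kappa$-sequence of domain coordinates and freeze the $\alpha$-th coordinate of $B_\xi$ on an initial segment below some $\delta_\xi$, with $\langle\delta_\xi\rangle$ continuous. Then for each $\alpha$ the set
\[
\{\gamma : (\alpha,0,\gamma),(\alpha,1,\gamma)\notin B\}
\]
is a diagonal intersection of clubs, hence club, with each $\delta_\xi$ included as a limit point. The hypotheses on $\kappa$ enter here:
\begin{itemize}
  \item If $\kappa$ is inaccessible, there are fewer than $\kappa$ possibilities for the part of a condition below $\delta$ restricted to fewer than $\kappa$ coordinates. At each stage we can therefore decide, among $\kappa^+$ candidates for the next element of $\dot Y$, one that is forced compatibly with all such possibilities, and so avoid collapsing choices.
  \item If $\kappa$ is not inaccessible, I replace this counting with $\diamond_\kappa$. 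The diamond sequence guesses the initial segments of the eventual generic on the frozen part. At each closure point $\delta$ I extend so as to take care of the guessed approximation.
\end{itemize}
The elements $z_\xi$ must also be chosen so that they are $\prec$-increasing, so that $X=\{z_\xi\}$ has order type $\kappa$. For this I use that $\dot Y$ is forced to be unbounded of type $\kappa^+$, while each $B_\xi$ decides only $\le\kappa$ bounds. For $\kappa=\aleph_0$ the construction is easier, because ``club'' means infinite and fusion just needs infinitely many free coordinates left at each $\alpha$. I expect this treatment to be where Baumgartner's original proof needs care.

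\textbf{General $\beta<\kappa^+$.} Write $\beta$ as a $\prec$-ordered concatenation of at most $\kappa$ blocks, each of order type $\le\kappa$; this is possible since $|\beta|\le\kappa$. I then run the same fusion of length $\kappa$, but interleaved. Fix a bookkeeping bijection $\kappa\to\kappa\times\kappa$, or more precisely into the pairs (block, position within the block). At each step I add one new element to the appropriate block. To keep the blocks in the right $\prec$-order, each block is placed above a $\prec$-bound reserved for it. These bounds come from applying Lemma 6.10 and the base case to the tails $\{y\in\dot Y : y\succ \text{bound}\}$, which are still forced to have type $\kappa^+$. This uses that each condition decides only $\kappa$ many bounds while $\dot Y$ has type $\kappa^+$. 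The fusion and the diamond (or inaccessibility) guessing are exactly as in the base case.

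The main obstacle is the limit stages of the fusion for non-inaccessible $\kappa$. There I must verify that $\diamond_\kappa$ lets the final condition remain in $R(\kappa,\lambda)$, and that every chosen $z$ is genuinely forced into $\dot Y$ by the fusion limit. I would first try to copy the inaccessible counting argument, with diamond guesses substituted for the enumeration of all small approximations.
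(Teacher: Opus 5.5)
There is a genuine gap already in the base case. You want the fusion limit itself to force each chosen $z_\xi$ (``one that is forced compatibly with all such possibilities'', ``every chosen $z$ is genuinely forced into $\dot Y$ by the fusion limit''). You give no reason this can be arranged, and it is not what makes the proof work: each step must leave the frozen block $E_\alpha\times 2\times F_{\alpha+1}$ as it was in $B_\alpha$, while which elements get forced typically depends on how that block is filled in.

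The paper never claims the limit forces anything. For every possible trace $D_\xi$ on the frozen block it records a fresh candidate $x^\alpha_\xi$ forced by $B_{\alpha+1}\cup D_\xi$, using the auxiliary sequence $(C_\xi:\xi\le\tau_\alpha)$ to treat the traces one after another. There are fewer than $\kappa$ traces when $\kappa$ is inaccessible or $\aleph_0$; under $\diamond_\kappa$ only the single trace coded by $S_\alpha$ is treated.

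The conclusion is drawn only afterwards. The set $X$ of all candidates has size $\le\kappa$, so there are $x\in Z\setminus X$ and $B'\le B'_\kappa$ with $B'\Vdash\check x\in\dot Y$. This $B'$ is itself a witness at every stage, so $\mathrm{obj}\,B'$ contains $\kappa$ candidates. Under $\diamond_\kappa$ it is a witness at every $\alpha\in U\cap S$, where $S$ is the stationary set on which $S_\alpha$ guesses the trace $V$ of $B'$ and $U$ is a club.

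So $\diamond_\kappa$ guesses the trace of this ground-model condition, not the generic. It also plays no role in keeping $B'_\kappa$ a condition; that is secured by keeping $(f_\alpha(\gamma),i,a_\alpha)$ out of every $B_\xi$ for $\gamma<\alpha$. Finally, if, as in your scheme, a single point $\delta_\xi$ must be free in all earlier columns, this breaks for $\kappa=\aleph_0$, since finitely many infinite sets can have empty intersection. The paper gives each column its own free point via an injective $g\colon[\omega]^2\to\omega$.

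The step $\beta>\kappa$ does not work as sketched either. Stacking blocks one above another (the natural use of your induction) handles successors and limits of cofinality $<\kappa$ via $\kappa$-closure. At limits of cofinality $\kappa$, however, it needs a lower bound for a decreasing $\kappa$-sequence, which $R(\kappa,\lambda)$ lacks. Interleaving the blocks within one fusion would instead require the final condition to force fresh elements into each of $\kappa$ many prescribed $\prec$-intervals, with the bounds fixed in $\mathfrak M$ before you know which trace-dependent candidates get forced. The witness argument above only supplies one fresh element, forced by one extension of the fusion limit.

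The missing idea is the paper's bookkeeping of \emph{exact} order types:
\begin{enumerate}
\item Enumerate $\beta=\{\delta_\alpha:\alpha<\kappa\}$. Whenever some $\bar B$ with the relevant trace has $\mathrm{ot}(\mathrm{obj}\,\bar B)=\delta_\alpha$ (resp.\ $\delta_\xi$ for $\xi\le\alpha$, under $\diamond_\kappa$), record $X^\alpha_\xi=\mathrm{obj}\,\bar B$.
\item The union $X$ of all records has size $\le\kappa$. After shrinking $Z$ so that $Y$ is unbounded in it, $X$ is $\prec$-bounded in $Z$, because $\mathrm{ot}(Z)$ then has cofinality $\kappa^+$ in $\mathfrak M$. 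This uses preservation of cofinalities from Lemmas 6.10--6.11; contrary to your assertion, it does not follow from $\kappa$-closure plus the $\kappa^{++}$-c.c.
\item Take $y$ above $X$ and $B'\le B'_\kappa$ with $B'\Vdash\check y\in\dot Y$. If $\mathrm{ot}(\mathrm{obj}\,B')=\delta_\gamma<\beta$, some record $X^\xi_\eta\subseteq\mathrm{obj}\,B'$ of the same order type would have to be cofinal in $\mathrm{obj}\,B'$. That contradicts $y\in\mathrm{obj}\,B'$.
\end{enumerate}
Hence $\mathrm{ot}(\mathrm{obj}\,B')\ge\beta$, with no positional control needed.
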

In its proof, mentioned by \cite{baumgartner_almost-disjoint_1976} to be a complicated version of the proof of Lemma
6.10, we will not use the fact that the elements of $Z$ are ordinals but only that both $Z$ and $\prec$ belongs to
the countable transitive model $\mathfrak{M}$. Since for every $z \in Z$ the partial order
$$s_\prec(z) = \{x \in Z : x \prec z\} \  \text{ordered by} \  {\prec}$$
will also belong to $\mathfrak{M}$, we can assume without loss of generality that, in $\mathfrak{M}[G]$, $Y$ is
unbounded in $Z$.

Given such an $Y \in \mathfrak{M}[G]$, let $\dot{Y} \in \mathfrak{M}$ be a name for it. For every $B \in R(\kappa,
\lambda)$, let
$$\mathrm{obj}\ B = \{x \in Z: B \Vdash \check{x} \in \dot{Y}\}.$$
In order to prove the Proposition above, it will be enough to ensure in $\mathfrak{M}$ that, for every $\beta <
\kappa^+$ and $B \in R(\kappa, \lambda)$ satisfying
$$B \Vdash \dot{Y} \subseteq \check{Z}, \  \dot{Y} \  \text{is unbounded in} \  (\check{Z}, \check{\prec})\
\text{and}\ \text{order-type}\ (\dot{Y}, \check{\prec}) = {(\kappa^+)}\,{\check{}}$$ ($\kappa^+$ is meant to be only
an ordinal here), there exists $B' \leq B$ such that $(\mathrm{obj}\ B', \prec)$ has order-type $\geq \beta$. From
now on, let $B$ be any member of $R(\kappa, \lambda)$ satisfying the statement above, which we can assume
additionally that $|\mathrm{domain}\ B| = \kappa$.

The case $\beta = \kappa$ has proof identical to Lemma 6.10.
\begin{proof}[Proof for $\beta = \kappa$]
  Choose $A \subseteq \kappa$ belonging to $\mathfrak{M}$ such that $|A| = |\kappa \setminus A| = \kappa$ with a
  $\kappa$-partition in $\mathfrak{M}$ of $\kappa \setminus A$ $(A_\alpha: \alpha < \kappa)$, all of whose $A_\alpha$
  are $\kappa$-sized. In $\mathfrak{M}$, we will construct by transfinite recursion the sequence $(B_\alpha: \alpha <
  \kappa)$ of elements of $R(\kappa, \lambda)$, the sequence $(f_\alpha: \alpha < \kappa)$ of functions, the sequence
  $(E_\alpha: \alpha < \kappa)$ of subsets of $\lambda$ and the sequence $(F_\alpha: \alpha < \kappa)$ of subsets of
  $\kappa$ satisfying:
  \begin{enumerate}[resume*=Baumgartner]
    \item $B_0 = B$ \cite[(17), p.\@ 429]{baumgartner_almost-disjoint_1976};
    \item \label{baumfalpha} For all $\alpha < \kappa$, $f_\alpha$ maps $\alpha \cup A \cup \bigcup \{A_\xi: \xi <
        \alpha\}$ 1-1 onto $\mathrm{domain}\ B_\alpha$ \cite[(18), p.\@ 429]{baumgartner_almost-disjoint_1976}.
    \item For all $\alpha < \kappa$, $E_\alpha = \{f_\alpha(\xi) : \xi < \alpha\}$ \cite[p.\@
        430]{baumgartner_almost-disjoint_1976};
    \item \label{baumincreq} If $\alpha < \gamma < \kappa$, then $B_\alpha \lneq B_\gamma$, $f_\alpha \subsetneq
        f_\gamma$ and $F_\alpha \subsetneq F_\gamma$
    \item If $\alpha$ is a limit ordinal, then
        $$B_\alpha = \bigcup_{\xi < \alpha} B_\xi, \quad f_\alpha = \bigcup_{\xi < \alpha} f_\xi, \quad F_\alpha =
        \bigcup_{\xi < \alpha} F_\xi.$$
    \item \label{baumrecsucreq} For every $\alpha < \kappa$,
        $${B_\alpha \cap (E_\alpha \times 2 \times F_{\alpha + 1})} = {B_{\alpha + 1} \cap (E_\alpha \times 2 \times
        F_{\alpha + 1})},$$ \cite[p.\@ 429-430, inside both Cases 1 and 2]{baumgartner_almost-disjoint_1976}.
  \end{enumerate}
  The requirements above already give us consistently how the recursion step must be if $\alpha$ is limit ordinal.
  The $E_\alpha$ is already fully defined in each step of the recursion. In order to conclude the recursive
  construction requirements, we must include additional requirements it must satisfy depending on the regular
  $\kappa$, that will be divided in three cases.

  \textit{Case 1}: $\kappa$ is uncountable and inaccessible. Here, we must construct additionally the sequence
  $(a_\alpha: \alpha < \kappa)$ of elements of $\kappa$ satisfying:
  \begin{enumerate}[resume*=Baumgartner]
    \item \label{baumfandaalpha} $F_\alpha = \{a_\xi: \xi < \alpha\}$;
    \item \label{baumaalpha} If $\gamma < \alpha < \kappa$, then $(f_\alpha(\gamma), 0, a_\alpha),
        (f_\alpha(\gamma), 1, a_\alpha) \not\in B_\xi$ for all $\xi \leq \alpha$ \cite[p.\@ 430, implicit in both
        Cases 1 and 2]{baumgartner_almost-disjoint_1976};
    \item \label{baumlimaalpha} If $\gamma < \alpha < \kappa$, then $a_\gamma < a_\alpha$ and, if $\alpha$ is an
        limit ordinal, then
        $$a_\alpha = \sup_{\xi < \alpha} a_\xi.$$
  \end{enumerate}
  The requirements above already define consistently $a_\alpha$ if $\alpha$ is limit ordinal and turn $F_\alpha$
  fully defined in each step of the recursion. It is true here that $|\wp(E_\alpha \times 2 \times F_{\alpha + 1})| <
  \kappa$ for all $\alpha < \kappa$, and that \ref{baumrecsucreq} combined with \ref{baumaalpha} implies
  \begin{enumerate}[resume*=Baumgartner]
    \item \label{baumextaalpha} If $\gamma < \alpha < \kappa$, then $(f_\alpha(\gamma), 0, a_\alpha),
        (f_\alpha(\gamma), 1, a_\alpha) \not\in B_\xi$ for all $\xi < \kappa$ \cite[(19), p.\@
        429]{baumgartner_almost-disjoint_1976}.
  \end{enumerate}
  Let us proceed with the recursive construction in this case.

  Let $B_0 = B$. Choose arbitrarily an 1-1 function $f_0 : A \rightarrow \mathrm{domain}\ B_0$ and an ordinal $a_0 <
  \kappa$. This is sufficient for the step $\alpha = 0$.

  In the $(\alpha + 1)$th step, fix $(D_\xi: \xi < \tau_\alpha)$ an enumeration of all sets $D$ such that
  $${B_\alpha \cap {(E_\alpha \times 2 \times F_{\alpha + 1})}} \subseteq {D} \subseteq {(E_\alpha \times 2 \times
  F_{\alpha + 1})},$$ hence $\tau_\alpha < \kappa$. To define $B_{\alpha + 1}$, the recursion will construct
  additionally $x^\alpha_\xi \in Z$ for (not necessarily all) pair $\alpha < \kappa$ and $\xi < \tau_\alpha$ in a way
  that the following requirement is satisfied:
  \begin{enumerate}[resume*=Baumgartner]
    \item \label{bauminacunc} If there exists $B' \leq B_{\alpha + 1}$ and $x \in Z$ different from all defined
        $x^\gamma_\eta$ with $\gamma \leq \alpha$ and $\eta < \tau_\alpha$, both satisfying
        $${B' \cap {(E_\alpha \times 2 \times F_{\alpha + 1})} = D_\xi} \  {\text{and}} \  B' \Vdash \check{x} \in
        \dot{Y},$$ then $x^\alpha_\xi$ is defined and
        $$B_{\alpha + 1} \cup D _\xi \Vdash {(x^\alpha_\xi)\,\check{}} \in \dot{Y}.$$
  \end{enumerate}
  In order to perform it we must construct recursively the $x^\alpha_\xi$ together with the sequence $(C_\xi: \xi
  \leq \tau_\alpha)$ as follows. Let $C_0 = B_\alpha \setminus {(E_\alpha \times 2 \times F_{\alpha + 1})}$. Given
  $C_\xi$ with $\xi < \tau_\alpha$, if there are $\bar{B} \in R(\kappa, \lambda)$ and $x \in Z$ distinct of all
  $x^\gamma_\eta$ defined so far (i.e.\@ with either $\gamma < \alpha$ or $\gamma = \alpha$ and $\eta < \xi$)
  satisfying
  $$\bar{B} \leq C_\xi, \ \bar{B} \cap {(E_\alpha \times 2 \times F_{\alpha + 1})} = D_\xi \ \text{and} \ \bar{B}
  \Vdash \check{x} \in \dot{Y},$$ then let $x^\alpha_\xi = x$ and $C_{\xi + 1} = \bar{B} \setminus {(E_\alpha \times
  2 \times F_{\alpha + 1})}$. Otherwise, let $C_{\xi + 1} = C_\xi$ and leave $x^\alpha_\xi$ undefined. If $\xi$ is
  limit ordinal, let $C_\xi = \bigcup_{\eta < \xi} C_\eta$. Note that all $C_\xi$ constructed here belongs to
  $R(\kappa, \lambda)$ due to \ref{baumkclosed} and the fact that $F_{\alpha + 1}$ is closed (i.e.\@ contains all its
  limit points).

  With $C_{\tau_\alpha}$ constructed, let $B_{\alpha + 1} \in R(\kappa, \lambda)$ be such that
  \begin{eqnarray*}
    |\mathrm{domain}\ B_{\alpha + 1} \setminus \mathrm{domain}\ B_\alpha| & = & \kappa, \\
    C_{\tau_\alpha} & \geq & B_{\alpha + 1}, \  \text{and} \\
    {B_\alpha \cap (E_\alpha \times 2 \times F_{\alpha + 1})} & = & {B_{\alpha + 1} \cap (E_\alpha \times 2 \times
    F_{\alpha + 1})}.
  \end{eqnarray*}
  The $B_{\alpha + 1}$ thus constructed satisfy \ref{bauminacunc}. Let $f_{\alpha + 1}$ be any function satisfying
  \ref{baumfalpha} and \ref{baumincreq}, and let $a_{\alpha + 1}
  > a_\alpha$ be such that \ref{baumaalpha} is satisfied (its existence is guaranteed by the validity of
  \ref{baumkclosed}). This is enough to conclude the recursive construction.

  Now, let $B_\kappa = \bigcup_{\alpha < \kappa} B_\alpha$ and $f = \bigcup_{\alpha < \kappa} f_\alpha$. We cannot
  say that $B_\kappa \in R(\kappa, \lambda)$ but, since $f$ maps $\kappa$ 1-1 onto $\mathrm{domain}\ B_\kappa$,
  \ref{baumextaalpha} implies that, for every $\alpha \in \mathrm{domain}\ B_\kappa$ and $\gamma$ such that
  $f^{-1}(\alpha) < \gamma < \kappa$, neither $(\alpha, 0, a_\gamma)$ nor $(\alpha, 1, a_\gamma)$ belongs to
  $B_\kappa$, so each element of $\mathrm{domain}\ B_\kappa$ has a closed unbounded subset of $\{a_\alpha: \alpha <
  \kappa\}$ able to satisfy \ref{baumrklreq3}, thus there exists $B'_\kappa \in R(\kappa, \lambda)$ such that
  $B'_\kappa \supseteq B_\kappa$.

  Let $X \in \mathfrak{M}$ be the set of all the $x^\alpha_\xi$ defined above. Since $|X| \leq \kappa$ in
  $\mathfrak{M}$, then $(X, {\prec})$ has order-type $< \kappa^+$ in both $\mathfrak{M}$, $\mathfrak{M}[G]$ and, once
  $(Y, {\prec})$ has order-type $\kappa^+$ in $\mathfrak{M}[G]$, there must be at least one element of $Z \setminus
  X$ belonging to $Y$. Let $x \in {Z \setminus X}$ and $B' \leq B'_\kappa$ be such that $B' \Vdash \check{x} \in
  \dot{Y}$ in $\mathfrak{M}$. Each $\alpha < \kappa$ has a $\xi < \tau_\alpha$ such that $B' \cap (E_\alpha \times 2
  \times F_{\alpha + 1}) = D_\xi$, then \ref{bauminacunc} implies that $x^\alpha_\xi$ is defined and $B' \leq
  B_{\alpha + 1} \cup D_\xi$. Consequently, $\mathrm{obj}\ B'$ contains a $\kappa$-sized subset of $X$, concluding
  this case.

  \textit{Case 2}: $\kappa$ uncountable and accessible. Here, we will also construct additionally the sequence
  $(a_\alpha: \alpha < \kappa)$ satisfying \ref{baumfandaalpha} to \ref{baumlimaalpha}, implying the validity of
  \ref{baumextaalpha} too. The constructions for ordinal limits and for $\alpha = 0$ are identical to the case above.

  In the $(\alpha + 1)$th step, however, since $|\wp(E_\alpha \times 2 \times F_{\alpha + 1})| = \kappa$ for most of
  $\alpha < \kappa$, we must rely on the validity of $\diamond_\kappa$ in $\mathfrak{M}$, which is equivalent to
  \begin{enumerate}[resume*=Baumgartner]
    \item \label{baumdiamondvar} There is a sequence $(S_\gamma: \gamma < \kappa)$ such that $S_\gamma \subseteq
        \gamma \times 2 \times \gamma$ for all $\gamma < \kappa$ and, for all $W \subseteq \kappa \times 2 \times
        \kappa$, $\{\gamma < \kappa: {W \cap {(\gamma \times 2 \times \gamma)}} = S_\gamma\}$ is stationary in
        $\kappa$ \cite[(22), p.\@ 430]{baumgartner_almost-disjoint_1976}.
  \end{enumerate}
  Let $(S_\gamma: \gamma < \kappa) \in \mathfrak{M}$ be like above. Here, instead of \ref{bauminacunc}, we will
  construct additionally $x_\alpha$ for (not necessarily all) $\alpha < \kappa$ satisfying:
  \begin{enumerate}[resume*=Baumgartner]
    \item \label{baumacunc} If there exist $B' \leq B_{\alpha + 1}$ and $x \in Z$ different from all defined
        $x_\xi$ with $\xi < \alpha$, both satisfying
        \begin{align*}
          (f_\alpha(\gamma), i, a_\alpha) \not\in B' \  \text{for all} \  \gamma < \alpha, \  i < 2; \\
          \{(\xi, i, \eta) \in {\alpha \times 2 \times \alpha}: (f_\alpha(\xi), i, a_\eta) \in {B'}\} = S_\alpha & \
          \text{and} \  B' \Vdash \check{x} \in \dot{Y};
        \end{align*}
        then $x_\alpha$ is defined and
        $${B_{\alpha + 1} \cup \{(f_\alpha(\xi), i, a_\eta): (\xi, i, \eta) \in S_\alpha\}} \Vdash
        {(x_\alpha)\,{\check{}} \in \dot{Y}}.$$
  \end{enumerate}
  The construction here is simple: If there exists $B' \leq B_\alpha$ with $|\mathrm{domain}\ B_{\alpha + 1}
  \setminus \mathrm{domain}\ B_\alpha| = \kappa$ satisfying the requirements above, let
  $$B_{\alpha + 1} = {(B' \setminus {(E_\alpha \times 2 \times F_{\alpha + 1})})} \, \cup \, {(B_\alpha \cap
  {(E_\alpha \times 2 \times F_{\alpha + 1})})}$$ and let $x_\alpha = x$. Otherwise, let $B_{\alpha + 1} \leq
  B_\alpha$ be arbitrary such that
  \begin{eqnarray*}
    |\mathrm{domain}\ B_{\alpha + 1} \setminus \mathrm{domain}\ B_\alpha| & = & \kappa; \  \text{and}\\
    {B_\alpha \cap (E_\alpha \times 2 \times F_{\alpha + 1})} & = & {B_{\alpha + 1} \cap (E_\alpha \times 2 \times
    F_{\alpha + 1})},
  \end{eqnarray*}
  and leave $x_\alpha$ undefined. The $f_{\alpha + 1}$ and $a_{\alpha + 1}$ are constructed as in Case 1.

  With the recursive construction over $\kappa$ done, let $B_\kappa = \bigcup_{\alpha < \kappa} B_\alpha$ and $f =
  \bigcup_{\alpha < \kappa} f_\alpha$. Arguing like in the previous case, there exists $B'_\kappa \supseteq B_\kappa$
  belonging to $R(\kappa, \lambda)$ even if $B_\kappa$ does not belong to.

  Let $X$ denote the set of all $x_\alpha$ defined in the recursive construction. Then, like in the previous case,
  $X$ belongs to $\mathfrak{M}$ and has cardinality $\leq \kappa$ in it, implying that $(X, {\prec})$ has order-type
  $< \kappa^+$ in both $\mathfrak{M}$ and $\mathfrak{M}[G]$. Thus, in $\mathfrak{M}[G]$, $Y$ must have an element of
  $Z \setminus X$.

  Let $x \in {Z \setminus X}$ and $B' \leq B'_\kappa$ be such that $B' \Vdash \check{x} \in \dot{Y}$ in
  $\mathfrak{M}$. We will work strictly in $\mathfrak{M}$ until the end of this case, so we can assume the validity
  of \ref{baumdiamondvar} for the sequence $(S_\gamma: \gamma < \kappa)$ used in the recursion.

  For every $\alpha < \kappa$, define the set
  $$U_\alpha = \{\gamma < \kappa: {(f(\alpha), 0, \gamma), (f(\alpha), 1, \gamma)} \not\in B'\} \quad
  \text{\cite[p.\@ 431]{baumgartner_almost-disjoint_1976}},$$ then each $U_\alpha$ is closed unbounded by definition.
  Define also the sets
  \begin{eqnarray*}
  % \nonumber % Remove numbering (before each equation)
    U &=& \{\alpha < \kappa: a_\alpha \in U_\gamma \  \text{for all} \  \gamma < \alpha\}; \\
    V &=& \{(\alpha, i, \gamma) \in \kappa \times 2 \times \kappa: (f(\alpha), i, a_\gamma) \in B'\}; \\
    S &=& \{\alpha < \kappa: {V \cap {(\alpha \times 2 \times \alpha)}} = S_\alpha\} \quad \text{\cite[p.\@
    432]{baumgartner_almost-disjoint_1976}}
  \end{eqnarray*}
  Since $\{a_\alpha: \alpha < \kappa\}$ is closed unbounded, $U$ will also be closed unbounded and, due to
  \ref{baumdiamondvar}, $S$ is stationary, so $U \cap S$ is $\kappa$-sized (a stationary set indeed). Once $x$ does
  not belong to $X$, $x_\alpha$ is defined for each $\alpha \in U \cap S$ and also $B' \leq B_{\alpha + 1} \cup
  S_\alpha$, implying that $\mathrm{obj}\ B'$ contains a $\kappa$-sized subset of $X$.

  \textit{Case 3}: $\kappa = \aleph_0$. Once \ref{baumkclosed} is false here, the proof needs some changes. Since any
  $\alpha < \aleph_0$ is finite, the construction can avoid completely any limit ordinal step. However, we cannot use
  a sequence $(a_\alpha: \alpha < \kappa)$ satisfying \ref{baumfandaalpha} to \ref{baumextaalpha}.

  Instead of $(a_\alpha: \alpha < \kappa)$, we will construct additionally along the recursion an 1-1 function $g:
  [\kappa]^2 \rightarrow \kappa$ satisfying:
  \begin{enumerate}[resume*=Baumgartner]
    \item $F_\alpha = \{g(H): H \in [\alpha]^2\}$;
    \item If $\gamma < \alpha < \kappa$, then $(f_\alpha(\gamma), 0, g(\{\gamma, \alpha\})), (f_\alpha(\gamma), 1,
        g(\{\gamma, \alpha\})) \not\in B_\xi$ for all $\xi \leq \alpha$.
  \end{enumerate}
  Such a definition of $F_\alpha$ ensure that $|\wp(E_\alpha \times 2 \times F_{\alpha + 1})| < \aleph_0$ for all
  $\alpha < \aleph_0$, then the proof here will follow the same idea as in Case 1, except that there will be no limit
  ordinal step. Both statements above together with \ref{baumrecsucreq} imply
  \begin{enumerate}[resume*=Baumgartner]
    \item \label{baumexta0aalpha} If $\gamma < \alpha < \kappa$, then $(f_\alpha(\gamma), 0, g(\{\gamma,
        \alpha\})), (f_\alpha(\gamma), 1, g(\{\gamma, \alpha\})) \not\in B_\xi$ for all $\xi < \kappa$.
  \end{enumerate}
  In any $\alpha$th step of the recursion, instead of $a_\alpha$, we will define the set
  $$\{g(\{\xi, \alpha\}): \xi < \alpha\} = F_{\alpha + 1} \setminus F_\alpha,$$
  which will be empty iff $\alpha = 0$.

  Let $B_0 = B$ and define $f_0$ like in Cases 1 and 2, which are enough to conclude the step $\alpha = 0$. In the
  $(\alpha + 1)$th step, we will proceed as in Case 1, with $(D_\xi: \xi < \tau_\alpha)$ defined likewise,
  $x^\alpha_\xi$ constructed for (not necessarily all) $\alpha < \kappa$ and $\xi < \tau_\alpha$ satisfying
  \ref{bauminacunc} ($|\tau_\alpha| < \kappa$ here too), together with the construction of $(C_\xi: \xi \leq
  \tau_\alpha)$ in order to define $B_{\alpha + 1}$ through $C_{\tau_\alpha}$, and $f_{\alpha + 1}$ will be
  constructed identically to Case 1. The definition of $C_\xi$ for $\xi$ limit ordinal will not be used.

  The construction of $F_{\alpha + 1} \setminus F_\alpha$ will follow the same rule for every $\alpha \neq 0$. Once
  defined $B_\alpha$ and $f_\alpha$, the definition of $R(\kappa, \lambda)$ guarantees that, for any $\gamma <
  \alpha$, there is infinitely many $\xi < \kappa$ such that $(f_\alpha(\gamma), 0, \xi), (f_\alpha(\gamma), 1, \xi)
  \not\in B_\alpha$. Then, once defined $g(\{\eta, \alpha\})$ for all $\eta < \gamma < \alpha$, choose one of the
  $\xi$ above being different from any other already in $F_{\alpha + 1}$ (i.e.\@ different from any $g(\{\zeta,
  \sigma\})$ with either $\sigma < \alpha$ or $\sigma = \alpha$ and $\zeta < \gamma$) and let it be $g(\{\gamma,
  \alpha\})$. This concludes the recursive construction over $\kappa = \aleph_0$.

  Now, let $B_\kappa = \bigcup_{\alpha < \kappa} B_\alpha$, $f = \bigcup_{\alpha < \kappa} f_\alpha$ and $F_\kappa =
  \bigcup_{\alpha < \kappa} F_\alpha$. Like in the previous two cases, it is not necessarily true that $B_\kappa \in
  R(\kappa, \lambda)$ here too, but \ref{baumexta0aalpha} implies that, for every $\alpha \in \mathrm{domain}\
  B_\kappa$ and $\gamma$ such that $f^{-1}(\alpha) < \gamma < \kappa$, neither $(\alpha, 0, g(\{f^{-1}(\alpha),
  \gamma\}))$ nor $(\alpha, 1, g(\{f^{-1}(\alpha), \gamma\}))$ belongs to $B_\kappa$. Therefore, each element of
  $\mathrm{domain}\ B_\kappa$ has an infinite subset of $F_\kappa$ able to satisfy \ref{baumrklreq3}, consequently
  there exists $B'_\kappa$ such that $B'_\kappa \supseteq B_\kappa$.

  Let $X \in \mathfrak{M}$ be the set of all $x^\alpha_\xi$ defined above. $|X| \leq \kappa$ in $\mathfrak{M}$
  implies that $\text{order-type}\ (X, {\prec}) < \kappa^+$ in both $\mathfrak{M}$ and $\mathfrak{M}[G]$. Since
  $\text{order-type}\ (Y, {\prec}) = \kappa^+$ in $\mathfrak{M}[G]$, there must be $x \in {Z \setminus X}$ and $B'
  \leq B'_\kappa$ such that $B' \Vdash \check{x} \in \dot{Y}$ in $\mathfrak{M}$. Therefore, like in Case 1, we can
  prove that \ref{bauminacunc} implies $\mathrm{obj}\ B'$ contains a $\kappa$-sized subset of $X$.
\end{proof}

The only difference between the proof above and the corrected proof of Lemma 6.10 is that, after making $Z = \delta$,
the existence of at least one element of $\delta \setminus X$ in $Y$ is guaranteed by the fact that $\delta$ has
cofinality $\geq \kappa^+$ in $\mathfrak{M}$, implying that $\sup\ X < \delta$ in both $\mathfrak{M}$ and
$\mathfrak{M}[G]$. Aside from typographical errors and blurred characters, the proof issues presented in
\cite{baumgartner_almost-disjoint_1976} can be summarized as follows:
\begin{itemize}
  \item It did not define $F_\alpha$ and worked along the entire proof as if $F_\alpha = \alpha$, which turns it
      unable to guarantee that both \ref{baumrecsucreq} and \ref{baumaalpha} are able to imply \ref{baumextaalpha}.
      Compared with the proof presented here, it also used $\alpha$ in the place of $a_\alpha$ many times when
      $\kappa$ was assumed accessible, but this specific problem could be circumvented by restricting the argument
      to elements of the closed unbounded set $\{\alpha < \kappa: a_\alpha = \alpha\} \subseteq \{a_\alpha: \alpha
      < \kappa\}$.
  \item It mentioned that the Case 1, where $\kappa$ is assumed inaccessible, can include the case $\kappa =
      \aleph_0$, but it did not mention that the falsity of \ref{baumkclosed} for $\kappa = \aleph_0$ imposes
      changes in the argument. Here, the changes were made by replacing the sequence $(a_\alpha: \alpha < \kappa)$
      to the 1-1 function $g: [\kappa]^2 \rightarrow \kappa$.
  \item It also constructed the sequence $(C_\xi: \xi < \tau_\alpha)$ instead of $(C_\xi: \xi \leq \tau_\alpha)$.
      This led to some incongruences in the proof: the $x^\alpha_0$ is explicitly mentioned as always undefined
      and, if $\tau_\alpha = \xi + 1$ (being valid iff $\tau_\alpha$ is finite), then $x^\alpha_\xi$ is never
      defined too, inhibiting the validity of \ref{bauminacunc}.
\end{itemize}

With Lemma 6.10 proved, the first paragraph of the proof of Lemma 6.11 furnish us a result ensuring that cofinalities
are preserved in $\mathfrak{M}[G]$. Therefore, since $Y \in \mathfrak{M}[G]$ is unbounded in $Z \in \mathfrak{M}$ and
has order-type $\kappa^+$, which is regular in both $\mathfrak{M}$ and $\mathfrak{M}[G]$, then $\text{order-type}\
(Z, {\prec})$ has cofinality $\kappa^+$ in $\mathfrak{M}$. We will use this to conclude the proof of the proposition.
\begin{proof}[Proof for $\beta > \kappa$]
  Here, we will perform a slightly modified version of the recursive construction over $\kappa$ made in the case
  $\beta = \kappa$. Fix an enumeration $(\delta_\alpha: \alpha < \kappa)$ of $\beta$. The required changes can be
  summarized as follows:
  \begin{description}
    \item[$\kappa = \aleph_0$ or $\kappa$ is inaccessible] Instead of defining $x^\alpha_\xi \in Z$ that satisfies
        \ref{bauminacunc}, define $X^\alpha_\xi \subseteq Z$ for (not necessarily all) $\alpha < \kappa$ and $\xi <
        \tau_\alpha$ such that the following is satisfied:
        \begin{itemize}
          \item If there exists $B' \leq B_{\alpha + 1}$ such that $(\mathrm{obj}\ B', {\prec})$ has order-type
              $\delta_\alpha$ and $B' \cap (E_\alpha \times 2 \times F_{\alpha + 1}) = D_\xi$, then
              $X^\alpha_\xi$ is defined, $(X^\alpha_\xi, {\prec})$ has order-type $\delta_\alpha$ and
              $X^\alpha_\xi \subseteq \mathrm{obj}\ {B_{\alpha + 1} \cup D_\xi}$ (thus $B' \leq {B_{\alpha + 1}
              \cup D_\xi}$ is valid).
        \end{itemize}
    \item[$\kappa$ is accessible] Instead of defining $x_\alpha \in Z$ satisfying \ref{baumacunc}, we must define
        $X^\alpha_\xi$ for (not necessarily all) $\xi \leq \alpha$ satisfying:
        \begin{itemize}
          \item If there exists $B' \leq B_{\alpha + 1}$ such that
              \begin{align*}
                (f_\alpha(\gamma), i, a_\alpha) \not\in B' \  \text{for all} \  \gamma < \alpha, \  i < 2;\\
                \{(\xi, i, \eta) \in {\alpha \times 2 \times \alpha}: (f_\alpha(\xi), i, a_\eta) \in {B'}\} =
                S_\alpha; & \ \text{and} \\
                \text{order-type}\ (\mathrm{obj}\ B', {\prec}) = \delta_\xi;
              \end{align*}
              then $X^\alpha_\xi$ is defined, $(X^\alpha_\xi, {\prec})$ has order-type $\delta_\xi$, and also
              $$X^\alpha_\xi \subseteq \mathrm{obj}\ {(B_{\alpha + 1} \cup \{(f_\alpha(\xi), i, a_\eta): (\xi, i,
              \eta) \in S_\alpha\})}.$$ Note that the existence of such an $B'$ implies
              \begin{eqnarray*}
                S_\alpha & \supseteq & \{(\xi, i, \eta): (f_\alpha(\xi), i, a_\eta) \in B_\alpha \cap (E_\alpha \times
                2 \times F_{\alpha + 1})\} \  \text{and} \\
                B' & \leq & {B_{\alpha + 1} \cup \{(f_\alpha(\xi), i, a_\eta): (\xi, i, \eta) \in S_\alpha\}}.
              \end{eqnarray*}
        \end{itemize}
  \end{description}
  In order to perform this construction, we must do it in the following way depending on the respective case:
  \begin{description}
    \item[$\kappa = \aleph_0$ or $\kappa$ inaccessible] Construct additionally the sequence $(C_\xi: \xi \leq
        \tau_\alpha)$ of elements of $R(\kappa, \lambda)$ such that, for $\xi = 0$ or $\xi$ limit ordinal, $C_\xi$
        will be defined in the same way as in the case $\beta = \kappa$, and each $X^\alpha_\xi$ will be defined
        together with $C_{\xi + 1}$ as follows. If there exists $\bar{B} \in R(\kappa, \lambda)$ satisfying
        $$\bar{B} \leq C_\xi, \  \bar{B} \cap (E_\alpha \times 2 \times F_{\alpha + 1}) = D_\xi \  \text{and} \
        \text{order-type}\ (\mathrm{obj}\ \bar{B}, {\prec}) = \delta_\alpha,$$ then let $\mathrm{obj}\ \bar{B}$ be
        $X^\alpha_\xi$ and $C_{\xi + 1}$ be $\bar{B} \setminus {(E_\alpha
        \times 2 \times F_{\alpha + 1})}$. Otherwise, let $C_{\xi + 1} = C_\xi$ and leave $X^\alpha_\xi$ undefined.
    \item[$\kappa$ accessible] We shall proceed in the same way as above, but now with $\tau_\alpha = \alpha + 1$,
        $$D_\xi = \{(f_\alpha(\eta), i, a_\zeta): (\eta, i, \zeta) \in S_\alpha\}$$
        for every $\xi < \alpha + 1$ and the $(\mathrm{obj}\ \bar{B}, {\prec})$ here must have order-type
        $\delta_\xi$ in order to define $X^\alpha_\xi$ together with $C_{\xi + 1}$.
  \end{description}

  Once concluded the recursive construction and defined $B_\kappa$, we can conclude similarly to the case $\beta =
  \kappa$ the existence of a $B'_\kappa \in R(\kappa, \lambda)$ such that $B'_\kappa \supseteq B_\kappa$. Through the
  same methods presented at the end of Cases 1, 2 and 3 in the proof for $\beta = \kappa$, the conclusion we will get
  here is that, if
  $$\text{order-type}\ (\mathrm{obj}\ B', {\prec}) = \delta_\gamma < \beta \  \text{for some} \ B' \leq B'_\kappa,$$
  then there will be at least one defined $X^\xi_\eta$ such that $(X^\xi_\eta, {\prec})$ has order-type
  $\delta_\gamma$ and $X^\xi_\eta \subseteq \mathrm{obj}\ B'$, thus $X^\xi_\eta$ is a cofinal subset of
  $\mathrm{obj}\ B'$ with respect to ${\prec}$.

  Let $X \in \mathfrak{M}$ be the union of all $X^\xi_\eta$ defined in the recursive construction above. Since, in
  both $\mathfrak{M}$ and $\mathfrak{M}[G]$, the order-type of $(Z, {\prec})$ has cofinality $\kappa^+$ and $|X| \leq
  \kappa$ is valid, then $X$ is bounded in $Z$ with respect to ${\prec}$. Therefore, our assumptions at the beginning
  of the proposition's proof, i.e.\@ $Y \in \mathfrak{M}[G]$ being unbounded in $Z$ with respect to ${\prec}$, allow
  us to conclude the existence of an $y \in Z$ and a $B' \leq B'_\kappa$ such that $x \prec y$ for every $x \in X$
  and $B' \Vdash \check{y} \in \dot{Y}$. Since no $X^\xi_\eta$ can be cofinal in $\mathrm{obj}\ B'$ with respect to
  ${\prec}$, then $(\mathrm{obj}\ B', {\prec})$ must have order-type $\geq \beta$.

  The proof above depends on the fact that cofinalities of $\mathfrak{M}$ are preserved in $\mathfrak{M}[G]$ and it
  works for every ordinal $\beta \in \mathfrak{M}$ such that $|\beta| = \kappa$ in $\mathfrak{M}$, which is enough to
  conclude the proof for $\beta > \kappa$.
\end{proof}

\section*{Observations}

Compared with the proof presented here, Baumgartner \cite{baumgartner_almost-disjoint_1976} replaces $F_\alpha$ by
$\alpha$ and $a_\gamma$ by $\gamma$ consistently along the entire proof provided by it, which is why I consider the
issues presented there are not merely typographical errors.

I tried to look for errata available in the journal where \cite{baumgartner_almost-disjoint_1976} was published and
also in all articles I could find citing \cite{baumgartner_almost-disjoint_1976}, but I was not able to find neither
an explicit mention of issues in its proof, nor descriptions of corrections it needs.


\begin{thebibliography}{1}

\bibitem{baumgartner_almost-disjoint_1976} James~E. Baumgartner.
\newblock Almost-disjoint sets, the dense set problem and the partition
  calculus.
\newblock {\em Annals of Mathematical Logic}, 9(4):401--439, May 1976.

\end{thebibliography}
\end{document}